\newtheorem{Theorem}{Theorem}
\newtheorem{Aufgabe}{Problem}
\newdefinition{Remark}{Remark}
\newdefinition{acknowledgments}{Acknowledgments}
\newcommand{\alert}[1]{#1}
\begin{document}

\begin{frontmatter}

\title{On conservative difference schemes for the many-body problem}
\author[jinr,rudn]{V. P. Gerdt}
\author[rudn]{M. D. Malykh}
\author[jinr,rudn]{L. A. Sevastianov}
\author[KaiLi]{Yu Ying}
\address[jinr]{Joint Institute for Nuclear Research,
Dubna, Moscow Region, Russia}
\address[rudn]{Department of Applied Probability and Informatics,
RUDN University, Moscow, Russia.}
\address[KaiLi]{
Department of Algebra and Geometry,
KaiLi University, China
}

\begin{abstract}
A new approach  to the construction of difference schemes of any order for the many-body problem that preserves all its algebraic integrals is proposed. We introduced  additional variables, namely, distances and reciprocal distances between bodies, and wrote down a system of differential equations with respect to coordinates, velocities, and the additional variables. In this case, the system lost its Hamiltonian form, but all the classical integrals of  motion of the many-body problem under consideration, as well as new integrals describing the relationship between the coordinates of the bodies and the additional variables are described by linear or quadratic polynomials in these new variables. Therefore, any symplectic Runge-Kutta scheme preserves these integrals exactly. The evidence for the proposed approach is given. To illustrate the theory, the results of numerical experiments for the  three-body problem on a plane are presented with the choice of initial data corresponding to the motion of the bodies along a  figure of eight  (choreographic test).
\end{abstract}



\begin{keyword}
Finite difference method \sep algebraic integrals of motion \sep dynamical system



\end{keyword}

\end{frontmatter}



\section*{Introduction}

One of the main continuous models is a dynamic system described by an autonomous system of ordinary differential equations, that is, a system of equations of the form
\begin{equation}\label{introd:ode}
\frac{dx_i}{dt}=f_i(x_1, \dots, x_n), \quad i=1,2,\dots n,
\end{equation}
where $t$ is an independent variable commonly interpreted as time, $x_1, \dots, x_n$ are the coordinates of a moving point or several points. In practice, the right-hand sides $f_i$ are often rational or algebraic functions of coordinates $x_1, \dots, x_n$ or can be reduced to such form by a certain change of variables.

In the framework of the classical approach, the solutions of Hamiltonian dynamical systems are found in two stages: first, algebraic integrals of motion \cite [\S~164]{Whittaker} are determined and then, based on Liouville's theorem \cite[\S~148]{Whittaker}, the problem is reduced to the calculation of integrals of algebraic functions referred to as quadratures.
However, construction of these integrals is usually not enough to reduce differential equations to quadratures.

For example, the classical problem of $n$ bodies \cite{Marshal} consists in finding solutions to an autonomous system of ordinary differential equations
\begin{equation}
\label{eq:nbp}
m_i \ddot{\vec{r}}_i = \sum \limits_{j=1}^n \gamma \frac{m_im_j}{r_{ij}^3} \left(\vec r_j -\vec r_i \right), \quad i=1, \dots, n
\end{equation}
Here $\vec r_i$ is the radius vector of the  $i$-th body, $m_i$ is its masses, $r_{ij}$ is the distance between the $i$-th and $j$-th bodies, and $\gamma$ is the gravitational constant.

Let us for brevity denote the components of the velocity of the  $i$-the body as $\dot x_i=u_i$, $\dot y_i=v_i$, and $\dot z_i=w$ and combine them in  vector  $\vec v_i$. This problem has 10 classical integrals of motion:
\begin{enumerate}
\item the momentum conservation  (3 scalar integrals)
\[
\sum \limits_{i=1}^n m_i \vec v_i = \mbox{const}
\]
\item the angular momentum conservation (3 scalar integrals)
\[
\sum \limits_{i=1}^n m_i \vec v_i \times \vec r_i = \mbox{const}
\]
\item the center-of-mass inertial motion (3 scalar integrals)
\[
\sum \limits_{i=1}^n m_i \vec r_i  - t \sum \limits_{i=1}^n m_i \vec v_i = \mbox{const}
\]
\item the energy conservation (1 scalar integral)
\[
\sum \limits_{i=1}^n \frac{m_i }{2} |\vec v_i|^2 - \gamma \sum \limits_{i,j} \frac{m_i m_j}{r_{ij}}= \mbox{const}
\]
\end{enumerate}
Bruns \cite[\S~164]{Whittaker} proved that every algebraic integral of  motion in this system is expressed algebraically in terms of the above 10 classical integrals. Note, that these algebraic integrals do not provide reduction of the  problem to quadratures.

When it is not possible to integrate a dynamic system analytically, it is done numerically. Among the methods of numerical analysis, the finite difference method is the most universal.
The numerical solution of the systems of autonomous equations \eqref{introd:ode} is performed via a difference scheme. Every difference scheme describes a transition from the value of $x$ taken at some  time $t$ to the value of $x$ taken at the next moment in time $t + \Delta t $. Hereafter we denote these new values as $ \hat x $. Usually this relationship is described by an algebraic equation similar, in its form, to a differential equation. For example, an explicit Euler scheme is described by a difference scheme
\[
\hat x-x= f(x)\Delta t.
\]
However, much more sophisticated schemes can also be  used, including implicit ones, which have the form of algebraic equations that not solved with respect to $ \hat x $, and multi-stage schemes {\em which use additional variables}. \alert{By this reason, we do not restrict} ourselves to any essential conditions in regarding the form of difference scheme.

\section{Finite-difference schemes preserving all the algebraic integrals of motion }
\label{n:diff}

Let us start from the basic definition. A difference scheme for autonomous system \eqref{introd:ode} is said to preserve the integral
\[
g(x_1, \dots, x_n) =C
\]
of this system if the equality
\[
g(\hat x) = g(x)
\]
is valid for any non-special choice of  $x$. We call a scheme conservative if it preserves all the algebraic integrals of motion of the considered autonomous system.

Standard finite-difference schemes, and first of all explicit Euler and Runge-Kutta schemes, do not preserve the algebraic integrals of motion, but \alert{yield} an approximate solution that is close to the exact solution at sufficiently small values of the time step in the considered time interval
\cite{Wanner}.
However, after its discretization, the model can be changed qualitatively, e.g., \alert{because of  the violation of energy conservation or geometric relationships and sometimes the appearance of dissipation} (cf.\cite{Lubich,Casc-2019}).

The idea to construct difference schemes that precisely preserve the integrals of motion in dynamical systems arose in the late 1980s. In the works of Cooper
\cite{Cooper}
and Yu.B. Suris
\cite{Suris-1988,Suris-1990}
a large family of Runge-Kutta schemes was discovered that preserve all the polynomial second-order integrals of motion that do not depend on $ t $ explicitly in any dynamical system and  also on the  symplectic structure in Hamiltonian systems
\cite{Lubich,Sanz-Serna}.

\begin{Remark}
Strictly speaking, these studies are related  only to integrals that are not explicitly dependent on time, and when they are presented in the literature 
\cite{Wanner},
these results are often limited to the case of quadratic forms. In reality, these reservations are not needed. In fact, one can always introduce a new independent variable $ \tau $ and rewrite the system \eqref{introd:ode} in the form
\begin{equation}\label{eq:tau}
\left\{
\begin{aligned}
&\frac{dx_i}{d\tau}=f_i(x_1, \dots, x_n), \quad i=1,2,\dots n,\\
& \frac{dt}{d\tau}=1.
\end{aligned}
\right.
\end{equation}
Any integral of  motion of the original system, quadratic in $x_1,\dots,x_n,t$,  is also a quadratic integral of the system \eqref{eq:tau} that does not contain $ \tau $. If necessary, a linear transformation can be converted to a quadratic form. By Cooper's theorem, this integral is preserved by any symplectic Runge-Kutta scheme applied to \eqref{eq:tau}. Since the right-hand side of the last equation of the system \eqref{eq:tau} is identically equal to unity, in any Runge-Kutta scheme, the projections of the slopes $ \vec k_i $ on the $ t $- axis will be equal to $ 1 $, and, hence, a step  made with respect to $ t $ is equal to the step along $ \tau $. As a result, the transition from layer to layer will be carried out in accordance to the same formulas as govern the transition done according to the Runge-Kutta scheme with the same Butcher matrix, but already written for the original system \eqref{introd:ode}. Therefore, the quadratic integral is preserved in the calculations according to the symplectic Runge-Kutta scheme applies the original system \eqref{introd:ode}.
\end{Remark}

\begin{Remark}
If a dynamical system admits only transcendental integrals of motion, one can try to construct difference schemes preserving these integrals. For example, recently Wan et al.
\cite[\S~5.2.1]{Wan-2017}
proposed a finite difference scheme for the Lotka-Volterra system, preserving its transcendental integral. The schemes obtained in this way cannot be expressed in terms of arithmetic operations only; in particular, the scheme proposed by Wan et al. involves logarithms. Since we intend to apply computer algebra methods to analyze of difference schemes in future research, it is of fundamental importance for us to deal with  algebraic equations only. For this reason, we avoid discussion of the use of transcendental integrals.
\end{Remark}

In the many-body problem~\eqref{eq:nbp}, 9 out of 10 classical integrals are quadratic in the coordinates of the bodies, their velocities and time $t$. Thereby, symplectic schemes exactly preserve 9 from 10 integrals, but they do not preserve the energy integral.Thus, the discretization introduces dissipation in the system.

The three-body problem describes the motion of a planet and its satellite around the Sun, and in this situation two characteristic parameters appear that have the dimension of time: the (approximate) period of revolution of the planet around the Sun and the period of revolution of the satellite around the planet. One can hope to obtain reasonable results by taking a time step substantially smaller than these two periods. At the same time, \alert{the positions of bodies after many periods can be of interest}. In this case, \alert{we need in solving the problem of many bodies in a large time scale. From general considerations, one could hope that symplectic integrators would allow greater accuracy in calculations for large times}.  The advantages and drawbacks of symplectic integrals in application to the many-body system long-term behavior have been discussed many times
\cite{Hernandez-2019}.

The first finite-difference scheme for the many-body problem, preserving all classical integrals of motion, was proposed in 1992 by Greenspan  \cite{Greenspan-1992,Greenspan-1995-1,Greenspan-1995-2,Greenspan-2004}
and independently in somewhat different form by  Simo and Gonz{\'a}lez
\cite{Simo-1993,Graham}.
The Greespan's scheme is a kind of combination of the midpoint method and discrete gradient method
\cite{McLachlan,Christiansen}
preserving a number of symmetries of the initial problem
\cite{Greenspan-1992}.

In the 2000s, several general approaches to construction of difference schemes were proposed that preserve all algebraic integrals of motion. To our opinion, these methods can be divided into two large groups: the methods based on modification of the Runge-Kutta schemes, and the methods used certain transformations of the original differential system.
The first group includes the parametric symplectic partitioned Runge-Kutta method
\cite{Brugnano-2012,Wang-2012}, in which the Runge-Kutta method is considered with the order of approximation chosen such that there is a free parameter in the procedure of determining the Butcher matrix coefficients. This parameter can be chosen to preserve a chosen quadratic integral.

Naturally, for practical implementation, just this step is the most difficult part of the procedure. If the scheme is chosen to be simplectic, then all quadratic integrals are preserved automatically. For example, for the many-body problem~\eqref{eq:nbp} one can construct an implicit finite difference scheme of any order. On the contrary, it is possible to take an explicit Runge-Kutta scheme with one or several parameters and define these parameters so that given integrals of motion become preserved. The time step itself is often used as a parameter. In this way a few explicit schemes have been constructed preserving both quadratic and non-quadratic integrals
\cite{Buono-2002,Zhang-2020-2}.
The Hamiltonian boundary value methods
\cite{Brugnano-2010,Brugnano-2012-2,Brugnano-2012-3},
can be considered as belonging to the same group. In these methods, the initial system is approximated according to the Galerkin procedure and then the scalar products are approximated by finite sums. As a result, the obtained method resembles the Runge-Kutta method, which preserves the integral of energy not exactly, but with any predetermined order of accuracy
\cite[Th. 3]{Brugnano-2012-2}.

The invariant energy quadratization method (IEQ, proposed
recently   Yang et al. \cite {Yang-2016, Yang-2017}) can be considered to be a member of the second group.
Based on this method, for a number of Hamiltonian systems including the two-body problem (Kepler problem), Hong Zhang et al.
\cite {Zhang-2020}
constructed finite-difference schemes conserving the energy. The IEQ method suggests finding such a change of variables, after which the energy becomes a quadratic form, so that the standard simplectic Runge-Kutta schemes, written for this transformed system, conserve its energy. The procedure admits increasing the number of unknowns and the loss of non-Hamiltonian form of equations.

In the past, every effort was made to perform replacements that preserve the symplectic structure of the many-body problem. In particular, when studying the simple collision of two bodies, the aim was to find not just a regularizing transformation, but a canonical transformation. Due to this,  the famous Weierstrass theorem on simple collisions was considered as an obstacle
\cite{Siegel}.
There is, however, a simple way to find a regularizing transformation, which was proposed by Burdet
\cite{Burdet}
and Heggie
\cite {Heggie}
and is thoroughly described in the book by Marchal
\cite[ch.~ 6] {Marshal}.
For this purpose, the simplectic structure of the problem should be abandoned and additional variables can be introduced. Therefore, the construction proposed by Hong Zhang et al.
\cite {Zhang-2020}
within the IEQ method for the two-body problem seems to be quite natural for the considered class of problems.

One of the delicate features of IEQ method is the preservation of constraints. The fact is that when additional variables are introduced, equations describing the relation between the coordinates and velocities of bodies, on the one hand, and the auxiliary variables, on the other hand. In the IEQ method these equations are not necessarily quadratic, so that these constraints are not preserved exactly. \alert{We argue that it is possible if one looks at such a change of variables that makes all integrals of motion and all constraint equations to be described by quadratic functions.} Therefore, in this paper we want to present the solution of the following problem.

\begin{Aufgabe}
\label{a:2}
For the $n$-body problem  \eqref{eq:nbp}
construct another system containing additional variables and having the following properties:
\begin{enumerate}
\item It possesses a sufficient number of algebraic integrals of motion in order to express additional variables in terms of the coordinates of the bodies,
\item With some choice of constant values in these integrals, its solutions coincide with those of the original system,
\item It has integrals of motion, which, if one takes into account the relationship between additional variables and the coordinates of the bodies, are transformed into 10 classical integrals of the many-body problem,
\item All integrals of motion of the system are quadratic in the coordinates and velocities of bodies, as well as in additional variables.
\end{enumerate}
\end{Aufgabe}

\section{Rationalization of the $n$-body problem}

First of all, let us eliminate irrationality by introducing new variables  $r_{ij}$, related to the coordinates by equation
\[
r_{ij}^2-(x_i-x_j)^2-(y_i-y_j)^2-(z_i-z_j)^2=0
\]
Differentiating this relation with respect to  $t$, we get
\[
r_{ij} \dot r_{ij} = (x_i-x_j)(u_i-u_j) +(y_i-y_j)(v_i-v_j)+(z_i-z_j)(w_i-w_j)
\]
or
\[
r_{ij} \dot r_{ij} = (\vec r_i - \vec r_j) \cdot (\vec v_i -\vec v_j)
\]
Let us rewrite the differential system~\eqref{eq:nbp} of $3n$ second-order equations into the  system of the first-order equations in the variables
\[
x_1, \dots, z_n, u_1, \dots, w_n, r_{12}, \dots, r_{n-1,n}
\]
This system consists of three coupled subsystems:
\begin{eqnarray}
&& \dot{\vec{r}}_i = \vec v_i,  \quad i=1, \dots, n \label{eq:nbp:1} \\
&& m_i \dot{\vec{v}}_i = \sum \limits_{j=1}^n \gamma \frac{m_im_j}{r_{ij}^3} \left(\vec r_j -\vec r_i \right), \quad i=1, \dots, n                                                   \label{eq:nbp:2} \\
&& \dot r_{ij} =\frac{1}{r_{ij}} (\vec r_i - \vec r_j) \cdot (\vec v_i -\vec v_j), \quad i,j=1, \dots, n; \,i \not =j. \label{eq:nbp:3}
\end{eqnarray}

Every solution to the many-body problem~\eqref{eq:nbp} satisfies this system, but, in general, the converse is not true. Not every solution to~\eqref{eq:nbp:1}-\eqref{eq:nbp:3} satisfies the relation
\[
r_{ij}^2=(x_i-x_j)^2+(y_i-y_j)^2+(z_i-z_j)^2
\]
and, thus, the systems~\eqref{eq:nbp} and~\eqref{eq:nbp:1}-\eqref{eq:nbp:3} have distinct solution sets. Therefore, generally speaking, the new system may have fewer integrals than the original. Howeverm, in the given case, we can prove that the new system inherits the set of classical integrals of the many-body problem~\eqref{eq:nbp}.

\begin{Theorem}
System \eqref{eq:nbp:1}-\eqref{eq:nbp:3} has 10 classical integrals of the problem~\eqref{eq:nbp} and additionally the integrals
\[
r_{ij}^2-(x_i-x_j)^2-(y_i-y_j)^2-(z_i-z_j)^2=\mbox{const}.
\]
\end{Theorem}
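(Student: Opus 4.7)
The plan is to verify that each of the claimed integrals has vanishing time derivative along solutions of (\ref{eq:nbp:1})--(\ref{eq:nbp:3}). The eleven quantities split naturally into three groups: the classical integrals that depend only on $\vec r_i$, $\vec v_i$ and $t$; the new constraint integrals $r_{ij}^2 - |\vec r_i - \vec r_j|^2$; and the energy, which couples the coordinates with the new variables~$r_{ij}$.

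For linear momentum, angular momentum, and center-of-mass motion the computation uses only (\ref{eq:nbp:1}) and (\ref{eq:nbp:2}). These equations are \emph{formally identical} to the original Newtonian system~(\ref{eq:nbp}): although $r_{ij}$ is now an independent variable, it enters the right-hand side in exactly the same positions and with the same symmetry $r_{ij}=r_{ji}$ as before. Hence the classical antisymmetrization arguments transfer verbatim --- for instance, $\frac{d}{dt}\sum_i m_i \vec v_i = \sum_{i\neq j}\gamma m_i m_j(\vec r_j - \vec r_i)/r_{ij}^3$ vanishes because the bracket is antisymmetric under $i\leftrightarrow j$ while the coefficient is symmetric, and analogous symmetrizations kill the derivatives of the angular momentum and of the center-of-mass integral. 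The new constraint integrals are equally immediate: differentiating $r_{ij}^2 - |\vec r_i - \vec r_j|^2$ and substituting (\ref{eq:nbp:1}) and (\ref{eq:nbp:3}) gives $2r_{ij}\dot r_{ij} - 2(\vec r_i - \vec r_j)\cdot(\vec v_i - \vec v_j)$, which is zero by the very definition of (\ref{eq:nbp:3}).

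The only delicate case is the energy, and here (\ref{eq:nbp:3}) plays the essential role. In the original problem~(\ref{eq:nbp}), conservation of $E$ relies on the chain-rule identity $r_{ij}\dot r_{ij} = (\vec r_i - \vec r_j)\cdot(\vec v_i - \vec v_j)$, which follows automatically from $r_{ij} = |\vec r_i - \vec r_j|$. In the extended system this Euclidean relation need no longer hold, so one might fear that energy conservation fails. The observation is that (\ref{eq:nbp:3}) imposes \emph{precisely this same identity} directly as an evolution equation. Therefore, differentiating $E$, substituting (\ref{eq:nbp:2}) into the kinetic term, and symmetrizing the resulting double sum in $(i,j)$ produces an expression proportional to $\sum_{i<j}\gamma m_i m_j\,\dot r_{ij}/r_{ij}^2$ that exactly cancels the derivative of the potential term. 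I expect this to be the only step that requires care; the point worth emphasizing in the write-up is that the would-be obstacle --- decoupling $r_{ij}$ from the coordinates --- has been removed by design through the addition of (\ref{eq:nbp:3}), and no use of the constraints $r_{ij}^2 = |\vec r_i - \vec r_j|^2$ is needed anywhere.
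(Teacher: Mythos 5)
Your proposal is correct and follows essentially the same route as the paper: term-by-term differentiation, antisymmetrization in $(i,j)$ for the momentum-type integrals, and the use of \eqref{eq:nbp:3} as the substitute for the chain-rule identity $r_{ij}\dot r_{ij}=(\vec r_i-\vec r_j)\cdot(\vec v_i-\vec v_j)$ both for the energy and for the constraint integrals. Your closing observation that the Euclidean constraint itself is never invoked is exactly the point the paper relies on, even if it states it less explicitly.
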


\begin{proof}
To check the conservation of expressions indicated in the theorem we will calculate the derivations of these expressions.

\begin{itemize}
\item The momentum conservation
\begin{equation}\label{momentum}
\frac{d}{dt} \sum \limits_{i=1}^n m_i \vec v_i =
\sum \limits_{i=1}^n m_i \dot{\vec{v}}_i = \sum \limits_{i=1}^n   \sum \limits_{j=1}^n \gamma \frac{m_im_j}{r_{ij}^3} \left(\vec r_j -\vec r_i \right) =0.
\end{equation}
\item The angular momentum conservation
\begin{equation}\label{AngularMomentum}
\frac{d}{dt} \sum \limits_{i=1}^n m_i \vec v_i \times \vec r_i  =
\sum \limits_{i=1}^n m_i \dot{\vec{v}}_i  \times \vec r_i = \sum \limits_{i=1}^n   \sum \limits_{j=1}^n \gamma \frac{m_im_j}{r_{ij}^3} \left(\vec r_j -\vec r_i \right) \times \vec r_i =0.
\end{equation}
\item The center-of-mass inertial motion
\begin{equation}\label{CenterMass}
\frac{d}{dt} \sum \limits_{i=1}^n m_i \vec r_i  - t \sum \limits_{i=1}^n m_i \vec v_i  = \sum \limits_{i=1}^n m_i \vec v_i  -  \sum \limits_{i=1}^n m_i \vec v_i  =0.
\end{equation}
\item The energy conservation
\[
\begin{aligned}
\frac{d}{dt} \sum \limits_{i=1}^n \frac{m_i }{2} (u_i^2+v_i^2+w_i^2)  & = \sum \limits_{i=1}^n m_i  \vec v_i \cdot \dot{\vec{v}}_i =   \sum \limits_{i=1}^n  \sum \limits_{j=1}^n \gamma \frac{m_im_j}{r_{ij}^3} \vec v_i \cdot \left(\vec r_j -\vec r_i \right)
\\ & =\gamma \sum \limits_{ij} \frac{m_im_j}{r_{ij}^3} \left(\vec v_i \cdot \left(\vec r_j -\vec r_i \right) + \vec v_j \cdot \left(\vec r_i -\vec r_j \right)\right)
\\ & =\gamma \sum \limits_{ij} \frac{m_im_j}{r_{ij}^3} \left(\vec v_i -\vec v_j \right) \cdot \left(\vec r_j -\vec r_i \right)
\end{aligned}
\]
and, due to \eqref{eq:nbp:3},
\[
\begin{aligned}
\frac{d}{dt} \sum \limits_{i,j} \frac{m_i m_j}{r_{ij}} = - \sum \limits_{i,j} \frac{m_i m_j}{r_{ij}^2} \dot r_{ij} = - \sum \limits_{i,j} \frac{m_i m_j}{r_{ij}^3} (\vec r_i - \vec r_j) \cdot (\vec v_i -\vec v_j)
\end{aligned}
\]
\item The additional conservation laws
\begin{equation}\label{additional}
r_{ij}^2-(x_i-x_j)^2-(y_i-y_j)^2-(z_i-z_j)^2=\mbox{const}
\end{equation}
follow from equations \eqref{eq:nbp:3}, since the derivative
\[
\frac{d}{dt} \left(r_{ij}^2-(x_i-x_j)^2-(y_i-y_j)^2-(z_i-z_j)^2 \right)
\]
is equal to
\[
2 r_{ij} \dot r_{ij} - 2 (\vec r_i -\vec r_j) (\vec v_i -\vec j) =0.
\]
\end{itemize}
\end{proof}


Now we have an autonomous system with rational right-hand side, all integrals of which are quadratic polynomials, except the following rational expression
\[
\sum \limits_{i=1}^n \frac{m_i }{2} (u_i^2+v_i^2+w_i^2) - \gamma \sum \limits_{i,j} \frac{m_i m_j}{r_{ij}}= \mbox{const}
\]
which corresponds to the energy conservation.

\section{System with quadratic polynomial integrals}

To get rid of the denominators in the energy conservation law, we introduce new additional variables $ \rho_ {ij} $ such that
\[
r_ {ij} \rho_ {ij} = 1.
\]
Note that this relation again is a quadratic polynomial what allow us to obtain a quadratic polynomial integral instead of the rational one.

Differentiating this relation with respect to $ t $, we obtain
\[
r_{ij} \dot \rho_{ij} + \rho_{ij} \dot r_{ij} =0
\]
or
\[
\dot \rho_{ij} = - \frac{\rho_{ij}}{r_{ij}^2} (\vec r_i - \vec r_j) \cdot (\vec v_i -\vec v_j).
\]
Now we rewrite the system \eqref {eq:nbp:1} - \eqref {eq:nbp:3} as the system of first-order equations with respect to the extended set of unknowns
\[
x_1, \dots, z_n, u_1, \dots, w_n, r_{12}, \dots, r_{n-1,n},  \rho_{12}, \dots, \rho_{n-1,n}
\]
Now we obtain the system of four coupled subsystems
\begin{eqnarray}
&& \dot{\vec{r}}_i = \vec v_i,  \quad i=1, \dots, n \label{eq:rho:1} \\
&& m_i \dot{\vec{v}}_i = \sum \limits_{j=1}^n \gamma \frac{m_im_j \rho_{ij}}{r_{ij}^2} \left(\vec r_j -\vec r_i \right), \quad i=1, \dots, n \label{eq:rho:2} \\
&& \dot r_{ij} =\frac{1}{r_{ij}} (\vec r_i - \vec r_j) \cdot (\vec v_i -\vec v_j), \quad i,j=1, \dots, n; \,i \not =j \label{eq:rho:3} \\
&&\dot \rho_{ij} = - \frac{\rho_{ij}}{r_{ij}^2} (\vec r_i - \vec r_j) \cdot (\vec v_i -\vec v_j), \quad i,j=1, \dots, n; \,i \not =j \label{eq:rho:4}
\end{eqnarray}
\alert{As well as the system~\eqref{eq:nbp:1}-\eqref{eq:nbp:3} the dynamical system system~\eqref{eq:rho:1}-\eqref{eq:rho:4} inherits the conservation laws of~\eqref{eq:nbp}.}

\begin{Theorem}\label{th:2}
The system \eqref{eq:rho:1}-\eqref{eq:rho:4} has 10 classical integrals of the many-body problem~\eqref{eq:nbp}, namely, Eqs.\,\eqref{momentum}-\eqref{CenterMass},
the energy conservation in the form
 \[
\sum \limits_{i=1}^n \frac{m_i }{2} (u_i^2+v_i^2+w_i^2) - \gamma \sum \limits_{i,j} m_i m_j \rho_{ij}= \mbox{const}\,,
\]
the additional integrals~\eqref{additional} and
\begin{equation}\label{eq:int:rho}
r_{ij}\rho_{i,j}=const,\quad i\neq j.
\end{equation}
\end{Theorem}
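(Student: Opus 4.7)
The plan is to follow the template of the proof of Theorem~1 almost verbatim, because the only differences between the systems~\eqref{eq:nbp:1}--\eqref{eq:nbp:3} and~\eqref{eq:rho:1}--\eqref{eq:rho:4} are (i)~the replacement of the factor $1/r_{ij}^3$ in the Newtonian force by $\rho_{ij}/r_{ij}^2$, and (ii)~the addition of the subsystem~\eqref{eq:rho:4} for the new auxiliary variables. I would therefore differentiate each of the ten quantities asserted to be conserved along solutions of~\eqref{eq:rho:1}--\eqref{eq:rho:4} and check that the derivative vanishes identically.

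First I would handle the three classical ``kinematic'' groups (momentum, angular momentum, centre-of-mass). The key observation is that the new force term $\gamma m_i m_j \rho_{ij}/r_{ij}^2$ remains symmetric in the index pair $(i,j)$, because $\rho_{ij}=\rho_{ji}$ and $r_{ij}=r_{ji}$. Consequently the double sums \eqref{momentum} and \eqref{AngularMomentum} still vanish by the same antisymmetrization in $i\leftrightarrow j$ that was used for Theorem~1, and \eqref{CenterMass} follows from~\eqref{eq:rho:1} exactly as before. The additional geometric integrals \eqref{additional} also transfer without change, since neither~\eqref{eq:rho:1} nor~\eqref{eq:rho:3} has been modified relative to~\eqref{eq:nbp:1}, \eqref{eq:nbp:3}.

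Next I would verify the new integral~\eqref{eq:int:rho}. This is a short direct computation: using~\eqref{eq:rho:3} and~\eqref{eq:rho:4},
\[
\frac{d}{dt}(r_{ij}\rho_{ij})=\dot r_{ij}\rho_{ij}+r_{ij}\dot\rho_{ij}
=\frac{\rho_{ij}}{r_{ij}}(\vec r_i-\vec r_j)\cdot(\vec v_i-\vec v_j)
-\frac{\rho_{ij}}{r_{ij}}(\vec r_i-\vec r_j)\cdot(\vec v_i-\vec v_j)=0,
\]
which is exactly the cancellation that was designed into~\eqref{eq:rho:4}.

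The main point, and the only step where the structure differs substantively from Theorem~1, is the energy integral: now the Newtonian potential $-\gamma\sum m_im_j/r_{ij}$ has been replaced by $-\gamma\sum m_im_j\rho_{ij}$, a \emph{polynomial} in the new unknowns. I would compute $\frac{d}{dt}\sum \tfrac{m_i}{2}|\vec v_i|^2$ from~\eqref{eq:rho:2} and, by the same symmetrization as in Theorem~1, rewrite it as
\[
\gamma\sum_{i,j}\frac{m_im_j\rho_{ij}}{r_{ij}^2}(\vec v_i-\vec v_j)\cdot(\vec r_j-\vec r_i).
\]
Then, crucially using~\eqref{eq:rho:4} rather than~\eqref{eq:nbp:3}, I would compute
\[
\frac{d}{dt}\sum_{i,j}m_im_j\rho_{ij}=\sum_{i,j}m_im_j\dot\rho_{ij}
=-\sum_{i,j}\frac{m_im_j\rho_{ij}}{r_{ij}^2}(\vec r_i-\vec r_j)\cdot(\vec v_i-\vec v_j),
\]
and the two expressions cancel, establishing conservation of the polynomial energy. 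This last manipulation is the only place where one must take care to use the new equation~\eqref{eq:rho:4} in order to avoid reintroducing $1/r_{ij}$; once that is observed, the rest is bookkeeping.
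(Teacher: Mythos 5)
Your proposal is correct and follows essentially the same route as the paper's own proof: differentiate each asserted integral along \eqref{eq:rho:1}--\eqref{eq:rho:4}, reuse the Theorem~1 computations for \eqref{momentum}--\eqref{additional}, verify $\tfrac{d}{dt}(r_{ij}\rho_{ij})=0$ directly from \eqref{eq:rho:3}--\eqref{eq:rho:4}, and cancel the kinetic-energy derivative against $\tfrac{d}{dt}\sum m_im_j\rho_{ij}$ computed via \eqref{eq:rho:4}. No substantive differences from the paper's argument.
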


\begin{proof}
\alert{In perfect analogy to the proof of Theorem 1 we perform explicit differentiation and simplify the obtained expressions. For Eqs.\,\eqref{momentum}-\eqref{additional} it is done exactly as the proof of Theorem 1. Since now the energy conservation has a slightly different form, we present the relevant computation}
\[
\begin{aligned}
\frac{d}{dt} \sum \limits_{i=1}^n \frac{m_i }{2} (u_i^2+v_i^2+w_i^2)  & = \sum \limits_{i=1}^n m_i  \vec v_i \cdot \dot{\vec{v}}_i =   \sum \limits_{i=1}^n  \sum \limits_{j=1}^n \gamma \frac{m_im_j \rho_{ij}}{r_{ij}^2} \vec v_i \cdot \left(\vec r_j -\vec r_i \right)
\\ & =\gamma \sum \limits_{ij} \frac{m_im_j \rho_{ij}}{r_{ij}^2} \left(\vec v_i \cdot \left(\vec r_j -\vec r_i \right) + \vec v_j \cdot \left(\vec r_i -\vec r_j \right)\right)
\\ & =\gamma \sum \limits_{ij} \frac{m_im_j \rho_{ij}}{r_{ij}^2} \left(\vec v_i -\vec v_j \right) \cdot \left(\vec r_j -\vec r_i \right)
\end{aligned}
\]
and, in view of \eqref{eq:rho:4},
\[
\begin{aligned}
\frac{d}{dt} \sum \limits_{i,j} m_i m_j \rho_{ij} = - \sum \limits_{i,j} \frac{m_i m_j \rho_{ij}}{r_{ij}^2} (\vec r_i - \vec r_j) \cdot (\vec v_i -\vec v_j).
\end{aligned}
\]
The validity of the conservation law~\eqref{eq:int:rho} follows from equations \eqref{eq:rho:3}-\eqref{eq:rho:4}:
\[
\frac{d}{dt} r_{ij} \rho_{ij}= \dot r_{ij} \rho_{ij} +  r_{ij} \dot \rho_{ij} = \frac{\rho_{ij}}{r_{ij}}  (\vec r_i - \vec r_j) \cdot (\vec v_i -\vec v_j) -  r_{ij} \frac{\rho_{ij}}{r_{ij}^2}  (\vec r_i - \vec r_j) \cdot (\vec v_i -\vec v_j)  =0.
\]
Since the differential equations of the considered system were derived by differentiating relations \eqref{additional} and \eqref{eq:int:rho}, the appearance of the above additional integrals is obvious.
\end{proof}

In general, it is not possible to state that the system  \eqref{eq:rho:1}-\eqref{eq:rho:4} has no other algebraic integrals, functionally independent of those listed above. However, a solution to the many-body problem satisfies to the extended system \eqref{eq:rho:1}-\eqref{eq:rho:4}, so that any algebraic integral of motion of the extended system remains constant on solutions of the many-body problem.  By the Bruns theorem
\cite[\S~164]{Whittaker}
on a manifold
\[
r_ {ij} ^ 2- (x_i-x_j) ^ 2- (y_i-y_j) ^ 2- (z_i-z_j) ^ 2 = 0, \quad
r_ {ij} \rho_ {ij} = 1, \quad i \not= j,
\]
\alert{such} integral is expressed algebraically in terms of the classical integrals.

According to Theorem \ref{th:2}, the autonomous differential system \eqref{eq:rho:1} - \eqref{eq:rho:4} containing $ n (n-1) $ additional variables $ r_ {ij} $ and $ \rho_ {ij} $, has the following properties:
\begin{itemize}
\item It has the quadratic integrals of motion
\[
r_{ij}^2-(x_i-x_j)^2-(y_i-y_j)^2-(z_i-z_j)^2=\mbox{const}
\]
and
\[
r_{ij} \rho_{ij}=\mbox{const},
\]
which allow to express the additional variables $ r_ {ij} $ and $ \rho_ {ij} $ in terms of the coordinates of the bodies.
\item If the constants in these integrals are chosen in such a way that
\[
r_{ij}^2-(x_i-x_j)^2-(y_i-y_j)^2-(z_i-z_j)^2=0
\]
and
\[
r_{ij} \rho_{ij}=1,
\]
then solutions to system coincide with those to the initial one~\eqref{eq:nbp}.
\item The new system has quadratic integrals of motion, which, taking into account the relationship between additional variables and the coordinates of the bodies, turn into 10 classical integrals of the many-body problem.
\end{itemize}

\noindent
Thus, the constructed system posses all the properties listed in the Problem~\ref{a:2}.

\section{Conservative schemes for $N$ body problem}

Since all classical integrals of the many-body problem, as well as the additional integrals, are quadratic in their variables, any symplectic Runge-Kutta difference scheme, including the simplest midpoint one, that is 
\begin{equation}
\label{eq:mp}
\hat x - x =f \left(\frac{\hat x+x}{2}\right) dt,
\end{equation}
preserves all these integrals. In particular,  the midpoint scheme written for the system \eqref{eq:rho:1}-\eqref{eq:rho:4}, preserves all its algebraic integrals exactly and is invariant under permutations of bodies and time reversal. It is not difficult to create high-order schemes which preserve all integrals of motion in the many-body problem.

The autonomous system of differential equations \eqref{eq:rho:1} - \eqref{eq:rho:4} preserves the symmetry of the original problem with respect to permutations of bodies and time reversal.
As noted above, the  midpoint scheme is invariant under these symmetries.

At each step of the scheme, new values will be determined not only for the coordinates and velocities, but also for the auxiliary quantities $r_ {ij}$ and $\rho_ {ij}$. If at the initial moment only the coordinates and velocities of the bodies were specified and the auxiliary variables were defined by the equalities
\[
r_{ij}=\sqrt{(x_i-x_j)^2+(y_i-y_j)^2+(z_i-z_j)^2}, \quad
\rho_{ij}=\frac{1}{r_{ij}},
\]
then these equalities are preserved exactly (up to the signs of the radicals) due to the fact that the auxiliary integrals \eqref{additional} and \eqref{eq:int:rho} are quadratic and exactly preserved under  the action of midpoint scheme. Therefore, the quantities $ r_ {ij} $ and $ \rho_ {ij} $ remain the distances between bodies and the inverse distances between bodies.

\section{Numerical experiments with planar three-body problem}

We developed a numerical algorithm based on the midpoint scheme  \cite {YY-2019} and realized it in SageMath~\cite{sage,SageMath}.  Discussion of this algorithm and its program implementation is beyond the scope of this paper and  will be presented in other paper. Now we illustrate the above considerations by a numerical example. For simplicity, we consider the planar dimensionless  problem of the motion of three bodies of equal mass with $ m_i = \gamma = 1 $. 

Before starting the numerical experiments, it is necessary to discuss the issue of organizing the calculations by means of the midpoint scheme. The problem is that this scheme is implicit, so that at each step it is necessary to solve the system of algebraic equations  \eqref{eq:mp}. 

We are going to use the method of simple iterations to solve the system of nonlinear algebraic equations \eqref{eq:mp} with respect to $\hat x$: starting from  $\hat x^{(0)}=x $ the sequence 
\begin{equation}\label{eq:sec}
\hat x^{(n+1)} = x +  f\left(\frac{\hat x^{(n)} +x}{2}\right) dt, \quad n=0,1,2, \dots
\end{equation}
is constructed. If this sequence has a limit  $\hat x$, then it is a root of the system of equations \eqref{eq:mp}. In practice it is  inconvenient to use the known estimates for  $\|\hat x^{(n)}-\hat x\|$, since they are rather cumbersome, particularly, for systems of equations with a large number of unknowns. However, we use the midpoint scheme in order to preserve the integrals of motion, 
therefore, we will  monitor that the increment of the integrals of motion does not go beyond the given boundaries ($10^{-8}$ in our examples) rather than that $\|\hat x^{(n)}-\hat x\|$ is small. When conducting numerical experiments, we often encountered divergence of the method. In this case we reduce the step $dt$. 

\subsection{Test on Lagrange solutions}

\begin{figure}
\centering
\includegraphics[width=0.5\textwidth]{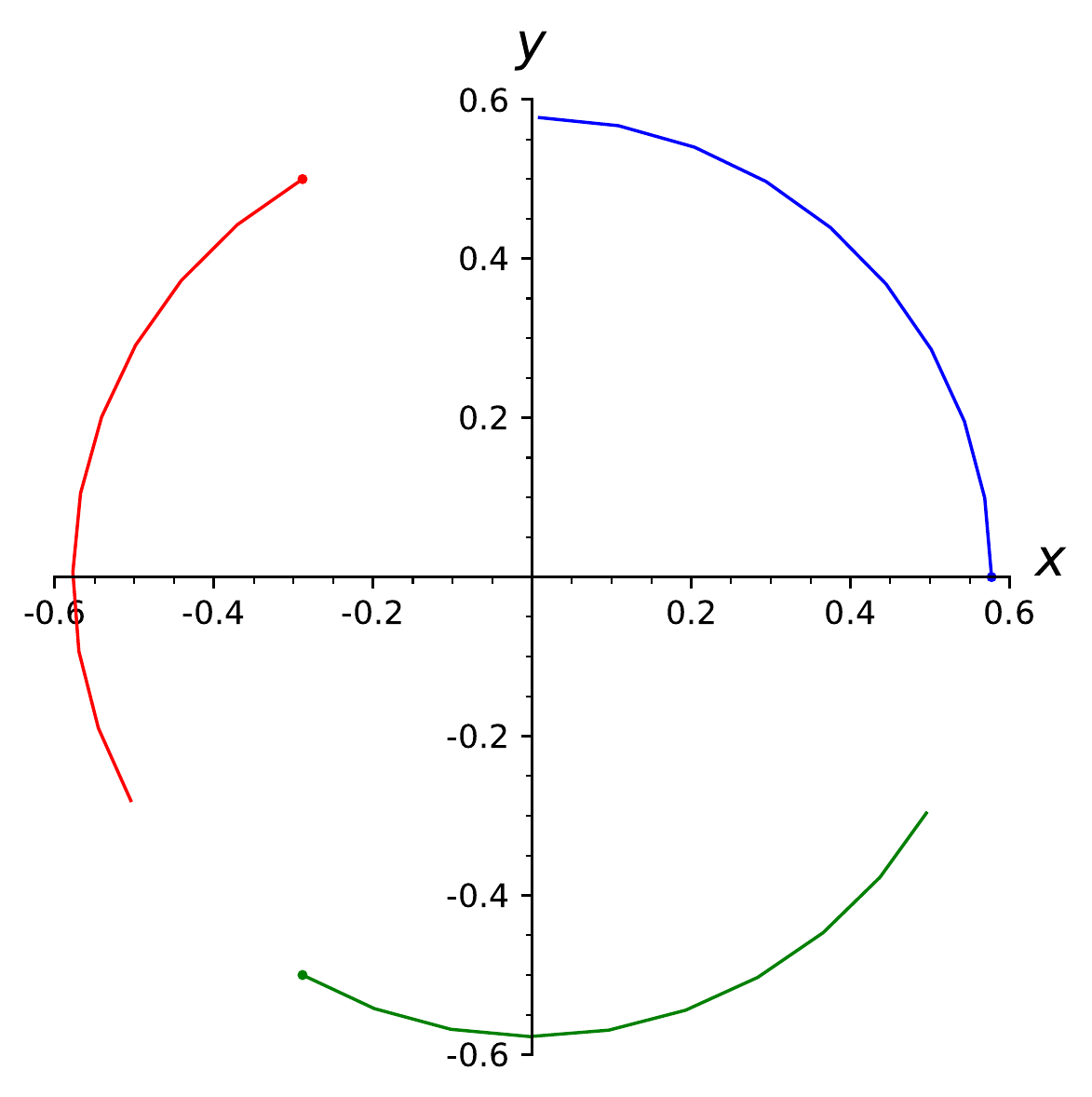}
\caption{Lagrange test, step $dt=1/10$. Trajectories of three bodies at  $0<t<1$, the initial positions are marked by dots.}
\label{fig:lagrange:0}
\end{figure}

\begin{figure}
\centering
\includegraphics[width=0.7\textwidth]{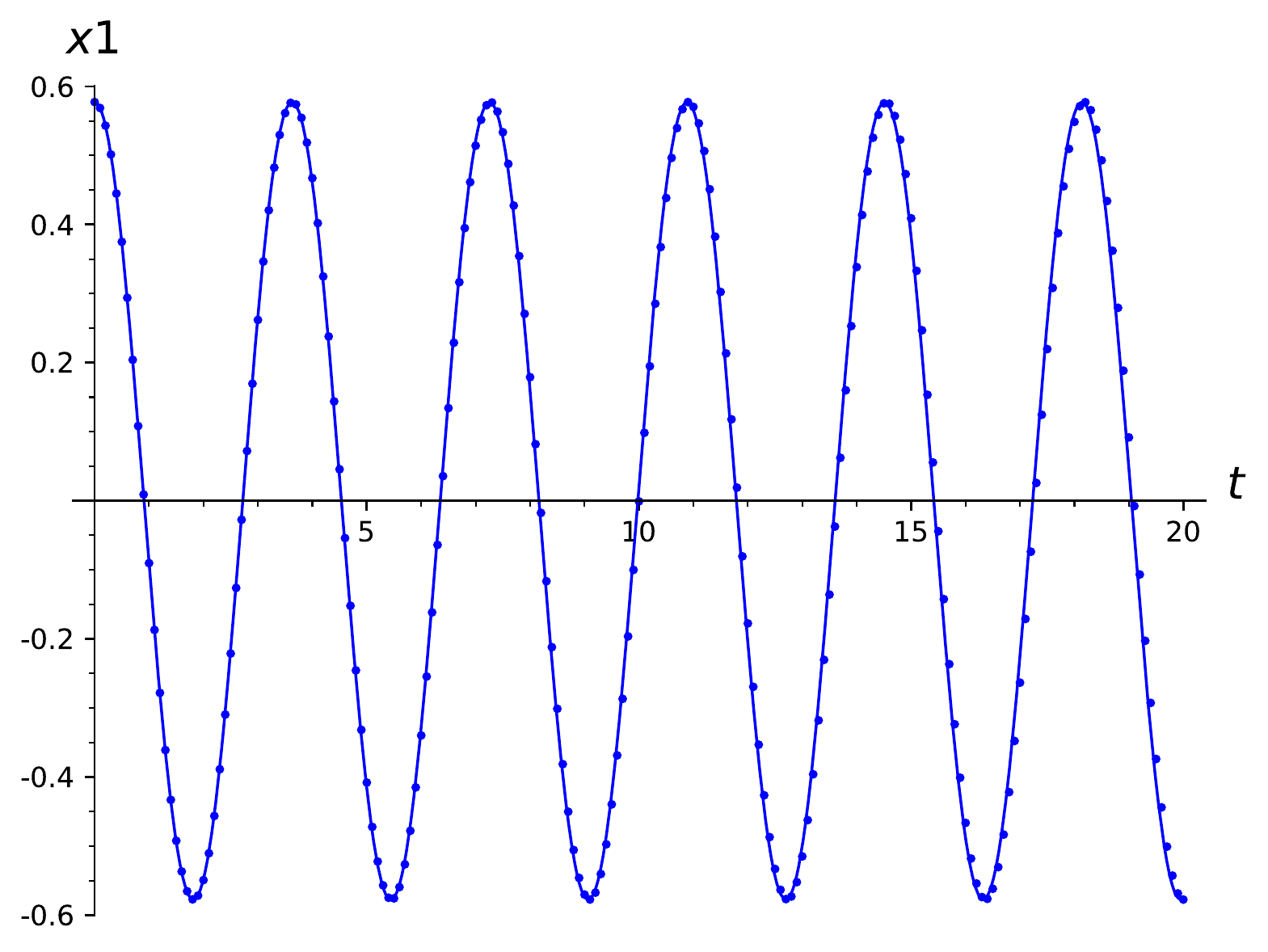}
\caption{Lagrange test, step  $dt=1/10$. Solid line shows the exact solution, dots represent the approximate one.}
\label{fig:lagrange:1}
\end{figure}

As is well known since Euler
\cite{Marshal,Montgomery-2001},
the planar three-body problem has two families of particular solutions that can be described in elementary functions. The first of them, traditionally associated with the name of Lagrange, can be described as follows: the bodies form a regular triangle with sides of a fixed length $a$, which rotates around the center of mass with constant speed. We took $ a = 1 $ and a quite noticeable step $ dt = 1/10 $, nevertheless, the nature of the movement remained the same, which is clearly visible along the trajectories (Fig. \ref{fig:lagrange:0}). The difference from the exact solution is barely noticeable in Fig. \ref{fig:lagrange:1}. 

At the same time, the distances between the points remain constant at the level of rounding error. The error in calculating the distance between the center of gravity and the bodies, as well as in determining the energy and angular momentum, is much larger, at the level of the specified $ 10^{-8} $. Thus, throughout the entire considered time interval, the bodies fall at the vertices of a regular triangle, which rotates around the center of mass.

\subsection{Choreographic test}

\begin{figure}
\centering
\includegraphics[width=0.9\textwidth]{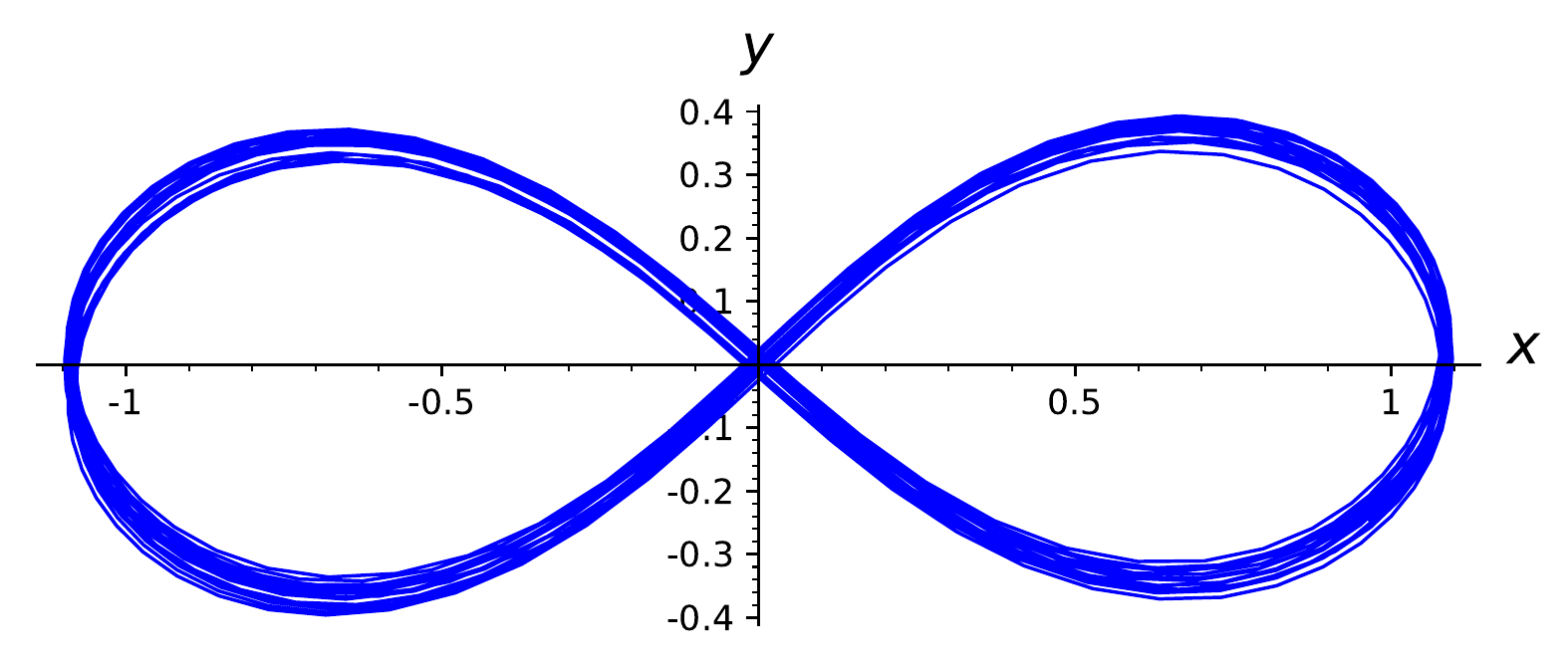}
\caption{Choreographic test, step $dt=1/10$. Trajectory of the first body for $0<t<200$, initial positions marked by dots.}
\label{fig:moore:1}
\end{figure}

\begin{figure}
\centering
\includegraphics[width=0.7\textwidth]{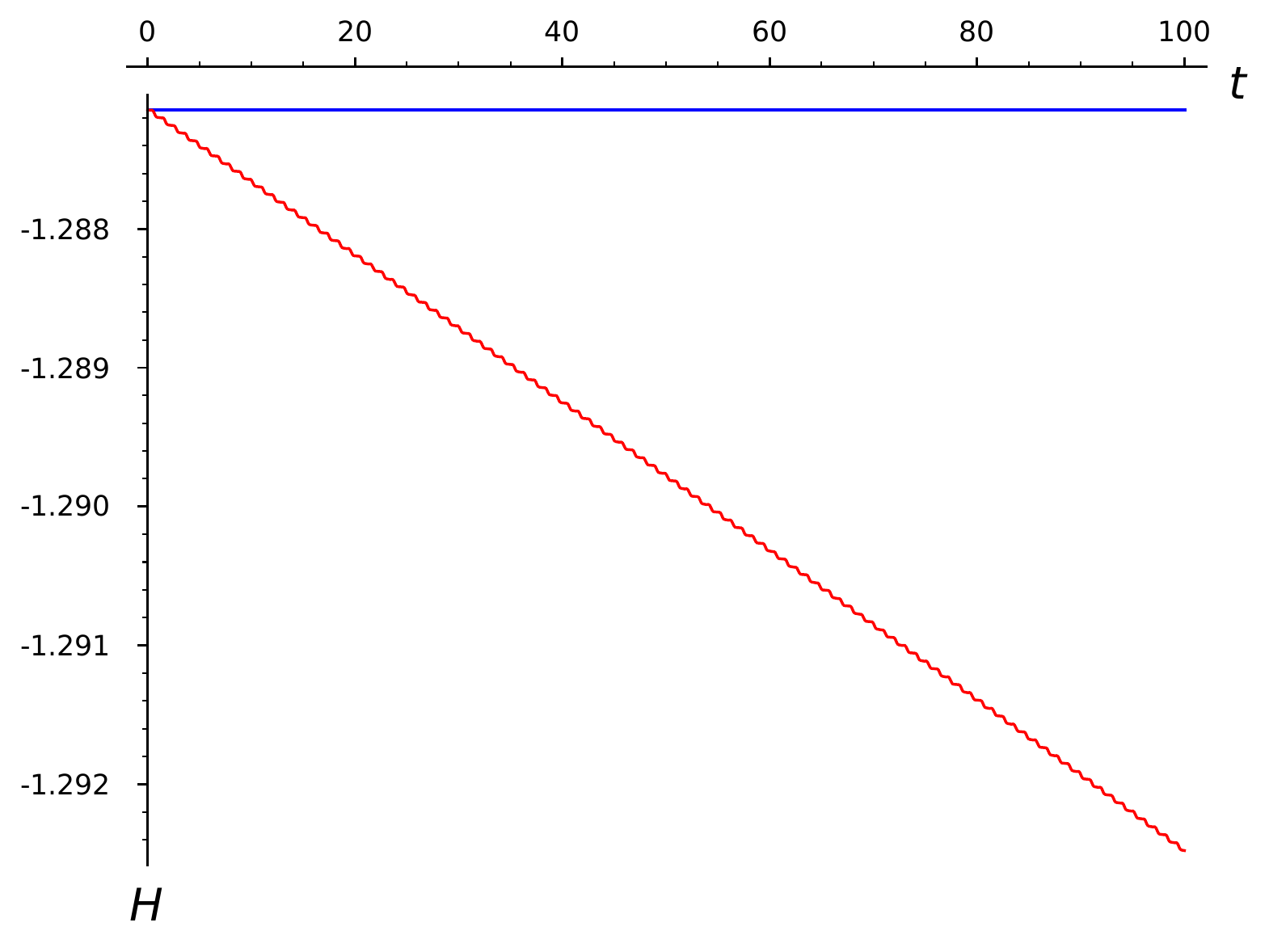}
\caption{Choreographic test, step $dt=1/10$. Dependence of energy $H$ on time for approximate solutions found using our algorithm (blue) and the rk4 scheme (red).}
\label{fig:simo:2}
\end{figure}

In 1993, Chris Moore discovered
\cite{Moor},
as part of a numerical experiment, a new periodic solution of the three-body problem, in which three bodies write out the eight; later Chenciner and Montgomery
\cite{Montgomery-2000,Montgomery-2001}
gave a justification to this fact. The initial conditions are known only approximately. We will use the values found numerically by Carles Simó and given in \cite{Montgomery-2000}.

First of all, we note that the figure eight really turns out (Fig.~\ref{fig:moore:1}). However, at a coarse step it is noticeable that this eight moves in time. It is not clear whether this movement can be stopped by the refinement of initial data.

To compare our algorithm and the standard algorithms based on explicit schemes, we took the fourth-order standard Runge-Kutta scheme (rk4) with the same coarse step $ \Delta t = 0.01$. Of course, the calculation according to this scheme is faster, and the approximation is better by two orders in magnitude. Nevertheless, the error in calculating the energy from rk4 grows almost linearly like $ 10^{- 4} t $ and becomes observable soon, and according to our scheme it does not leave the error corridor $\pm 10^{-8}$ (Fig.~\ref{fig:simo:2}).

\subsection{Test with small distances}
\label{n:col}

\begin{figure}
\centering
\includegraphics[width=0.5\textwidth]{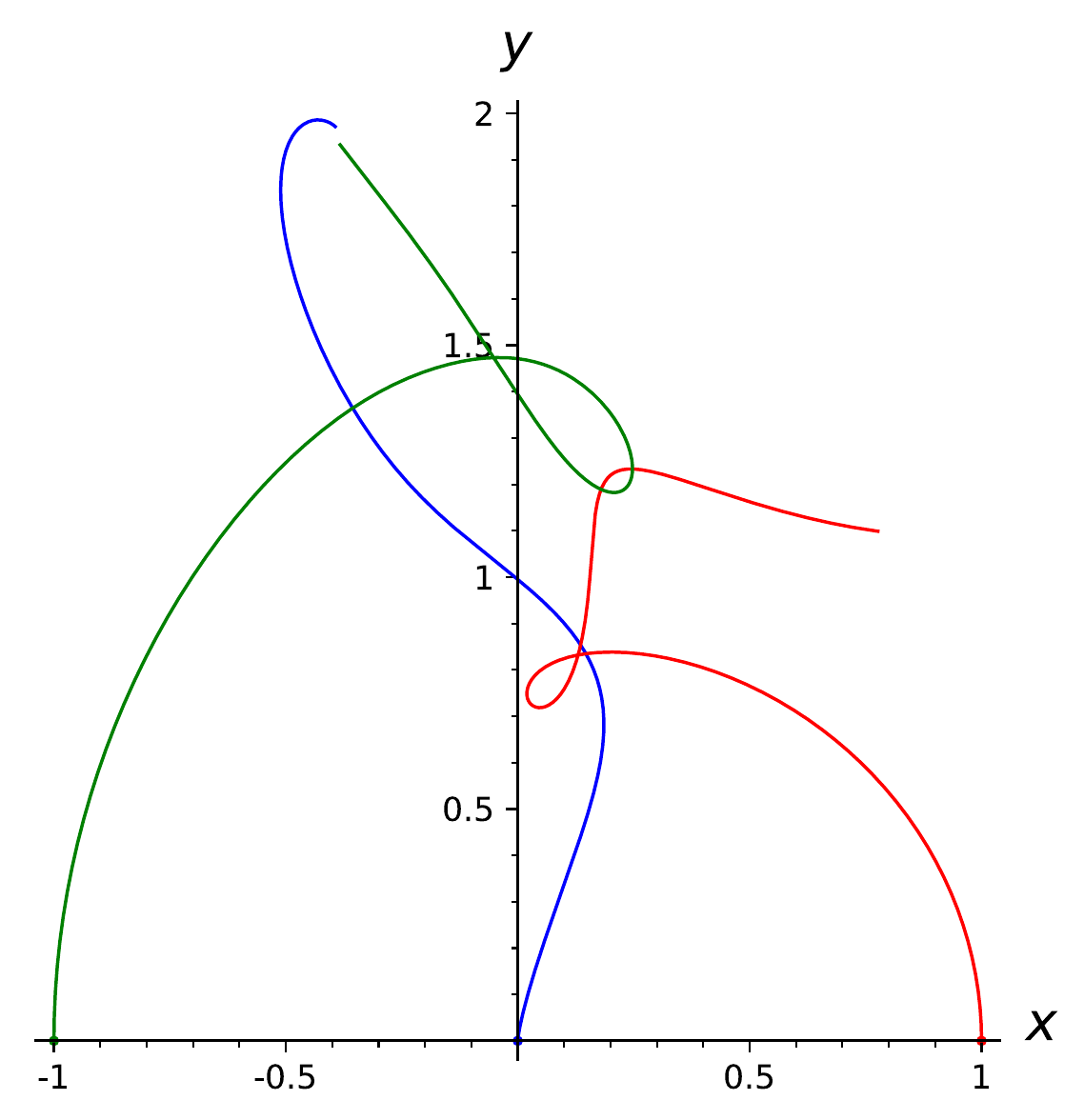}
\caption{Test with collision, step $dt=6\cdot 10^{-3}$. Trajectories of the three bodies for $0<t<2$, initial positions marked by dots.}
\label{fig:col:0}
\end{figure}

\begin{figure}
\centering
\includegraphics[width=0.5\textwidth]{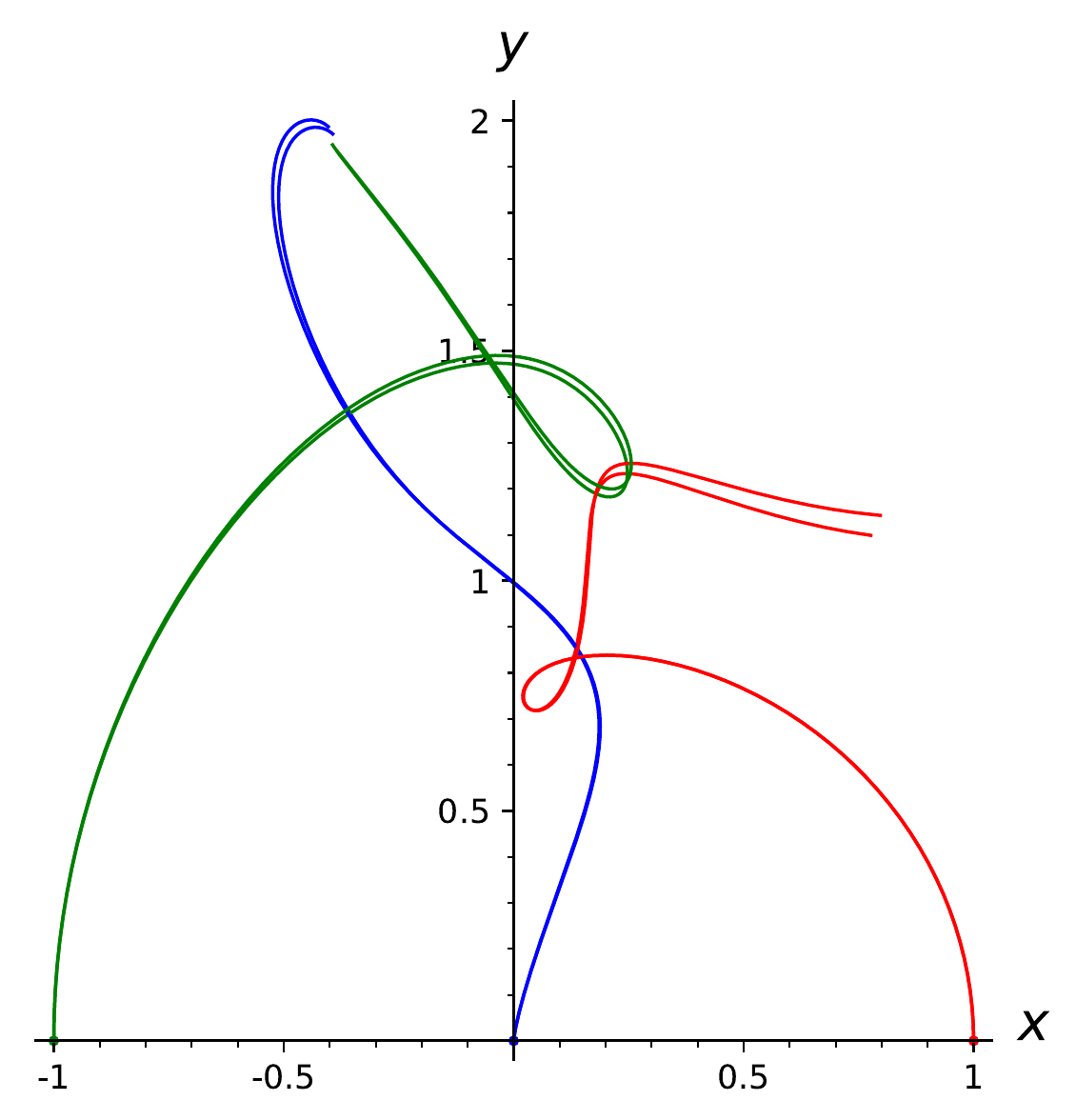}
\caption{Test with collision, step $dt=6\cdot 10^{-3}$. Trajectories of the three bodies for $0<t<2$, the initial positions are marked by dots.}
\label{fig:col:5}
\end{figure}

In all the tests considered above, the bodies remained at a considerable distance from each other, so there was no computational singularity rather typical for the many-body problem, i.e., the situation when  bodies pass  close to each other. 

At the initial moment of time, the following conditions are taken: the bodies lie on the line $ Ox $, the first body at the point $ x = 0 $, the second at  $ x = 1 $, the third at  $ x = - 1$. The first body is at rest, and the initial velocities of the other two bodies are directed along the $y$-axis and are equal to $ 1 $ and $ 1.5 $, respectively. Then  in the process of computation, it turns out that the bodies several times pass close to each other (Fig. \ref{fig:col:0}). Moreover, the applicability conditions of the iteration method make it necessary to reduce the step.  Of course, we cannot assume that at this moment a true collision occurs, since, according to the Weierstrass theorem, the collision in the many-body problem with arbitrary initial data is improbable \cite{Siegel}. 

The question of the stability of the many-body problem with respect to the initial data is very complex and we would not want to invade this area here. However, it seemed important to us to note that under small perturbations of the initial data our scheme yields trajectories close to the original ones. We checked this  in several numerical experiments, e.g., in the test with possible collision, we increased the velocity of the third body by $ 0.01 $ and compared the trajectories (Fig. \ref{fig:col:5}). It is clearly seen that the new trajectories are close to the old ones, and the point of possible collision turns out to be almost at the initial position.

\section*{Conclusion}

The major result of our work is Theorem \ref{th:2}, which yields a solution to Problem \ref{a:2} (Section \ref{n:diff}) and can be used to construct finite-difference schemes exactly preserving all its algebraic integrals and invariant under the permutations of bodies and time reversal. The midpoint scheme is an example of such scheme when it applied \eqref{eq:rho:1}-\eqref{eq:rho:4}. It should be  emphasized that in this way we can construct high-order schemes that preserve all integrals of motion in the many-body problem.

The key problem with the application of these schemes in practice, of course, is their implicitness, which can be overcome by both numerical and numerical-analytical methods. Obviously, errors in solving a nonlinear system of algebraic equations describing one step in a scheme can completely level out all the advantages of the proposed scheme~\cite{Zhang-2020-2}.
Therefore, in our opinion, this issue requires further study, including numerical experiments. The numerical example given above is intended only to illustrate theoretical results.

However we believe that the conservative difference schemes constructed above for the many-body problem will not only make it possible to carry out calculations for large times, but will also make it possible to qualitatively investigate the properties of solutions of the many-body problem by the finite difference method.

It should also be noted that when comparing the schemes, the focus of the researchers' attention was always on the quantitative proximity of the exact and approximate solutions, while the question of preserving the qualitative properties of the exact solution has remained insufficiently studied, although the  tempting possibility to ``determine the nature of the dynamic process using only rough calculations with a large grid pitch'' according to the symplectic scheme was noted in
\cite{Gevorkyan-2013}.
Meanwhile, for a linear oscillator, the simplest Runge-Kutta symplectic scheme, the midpoint scheme, allows not only  accurate preservation of the energy integral, but also  obtaining an exact periodic approximate solution and closed polygons as an analogue of closed phase trajectories of the continuous model
\cite{YY-2019}.

In other words, after discretization, a model is obtained that inherits almost all the algebraic and qualitative properties of a continuous model.
In the theory of partial differential equations, discretizations inheriting certain properties of differential equations and operations on the dependent variables included in them, are called {\it mimetic} (i.e., imitative) or {\it compatible}
\cite{Shashkov-1996,Arnold-2006,Castillo-2013,Vega-2014}.
Therefore, we find it appropriate to consider the construction of conservative difference schemes in the context of the more general and more important issue of constructing mimetic difference schemes for the many-body problem, of course, clarifying the concept itself.

Nevertheless, on the basis of the midpoint scheme and idea of introducing auxiliary variables for the many-body problem, a difference scheme is explicitly presented that preserves all the classical integrals of motion precisely and is invariant under permutations of bodies and time reversal. In further studies we plan to explore numerical and algebraic properties of this system. We believe that this scheme inherits at least a part of the algebraic and group-theoretical properties of exact solutions of the many-body problem.

\begin{acknowledgments}
This work is supported by the Russian Science Foundation (grant no.~20-11-20257).
\end{acknowledgments}

\section*{References}
\bibliographystyle{elsarticle-num}
\bibliography{malykh.bib}

\end{document}